\documentclass[11pt]{article}
\usepackage{pgfplots}
\pgfplotsset{compat=1.15}
\usepackage{mathrsfs}
\usetikzlibrary{arrows}
\usepackage{amscd,amsmath,amsfonts,amssymb,amsthm,cite,mathrsfs,color} 
\usepackage{dsfont} \usepackage{leftidx}
\usepackage{tikz} \usepackage{enumerate}
\usepackage{appendix}

\usepackage{hyperref}

\usepackage[a4paper,left=25mm, right=25mm, top=25mm, bottom=25mm]{geometry}
\usepackage{graphicx} \usepackage{xcolor}

\numberwithin{equation}{section} 


\usepackage{amsthm}

\theoremstyle{plain}
\newtheorem{theorem}{Theorem}[section]
\newtheorem{lemma}[theorem]{Lemma}
\newtheorem{corollary}[theorem]{Corollary}

\theoremstyle{definition}

\newtheorem{remark}{Remark}[section]

\newtheorem*{acknow}{Acknowledgments}
\begin{document}
	
	\title{\bf{ Global and local limits for products of rectangular Ginibre matrices }}
	
	\author{
		Yandong Gu\footnotemark[1],   
	}
	\renewcommand{\thefootnote}{\fnsymbol{footnote}}
	\footnotetext[1]{School of Mathematical Sciences, University of Science and Technology of China, Hefei 230026, P.R.~China. E-mail: gd27@mail.ustc.edu.cn
		
	}

	\maketitle
	\begin{abstract}
We investigate singular value statistics for products of independent rectangular complex Ginibre matrices. When the rectangularity parameters of the matrices converge to a common limit in the asymptotic regime, the limiting spectral density is derived, and the local statistics  in  the bulk are shown to be governed by the universal sine kernel. This generalizes the classical results for products of square Ginibre matrices to a specific class of rectangular matrix products.
	\end{abstract}
	
	\section{Introduction and main results}
	
	\subsection{Introduction}
The study of products of random matrices dates back to the seminal work of Bellman \cite{Bel54} in 1954. Fundamental asymptotic results were later established by Furstenberg and Kesten \cite{FK60}, who developed laws of large numbers and central limit theorems for such products, extending classical probability theory to non-commutative settings.

In this work, we consider a product of independent rectangular complex Ginibre matrices:
\begin{equation}
	Y_M = X_M \cdots X_1,
\end{equation}
where each $X_j$ is of size $N_j \times N_{j-1}$. We associate the dimensions $N_0, N_1, \dots, N_M$ with a large integer parameter $N$ (also denoted as $N_0$), such that
\begin{align}
	\min\{N_0, \dots, N_M\} = N_0 = N,
\end{align}
and define the nonnegative integers
\begin{align}
	\nu_j = N_j - N_0, \quad j = 0, \dots, M.
\end{align}
Note that $\nu_0 = 0$ and $\nu_j \geq 0$ for $j = 1, \dots, M$. The exact joint density of the singular values of $Y_M$ was derived by Akemann, Kieburg, and Wei \cite{AKW13}, who showed that these singular values form a determinantal point process. Kuijlaars and Zhang \cite{KZ14} subsequently provided a double integral representation for the correlation kernel.

When the matrices are square, the statistical properties of the singular values are well-understood. The limiting spectral density follows the Fuss-Catalan distribution\cite{MS17}\cite{Ne14}, and the local correlations in the bulk are universal\cite{LWW23}\cite{LWZ16}, governed by the sine kernel-the same as for a single Ginibre matrix\cite{For2010}. This universality reflects the robustness of local spectral statistics. However, many applications involve inherently non-square transformations, where input and output dimensions differ. This motivates the study of products of rectangular Ginibre matrices. A key parameter is the rectangularity-the ratio of dimensions of each matrix. A central question is how rectangularity influences the spectral properties of the product. When the matrices have differing degrees of rectangularity, the analysis becomes particularly challenging, and the limiting behavior may be highly nontrivial.
In this work, we focus on the regime where the collective depth-to-width parameter 
\begin{equation}
	\Delta_{M,N} = \sum_{j=0}^{M} \frac{1}{N + \nu_j} \to 0
\end{equation}
introduced in \cite{GU25}\cite{LWW23}, indicating that the number of matrices is much smaller than their dimensions. We assume that
\begin{equation}
	\lim_{N\to \infty} \frac{N}{N_l} = y_l \in (0,1], \quad l=1,\dots,M,
\end{equation}
where the parameters $y_l$ capture the limiting rectangularity ratios and fundamentally shape the spectral distribution.
Our analysis proceeds in two main steps. First, we derive the limiting mean spectral density for the squared singular values of $Y_M$. The Stieltjes transform $G(z)$ of this limiting distribution satisfies the algebraic equation (see e.g.\cite{AGT10})
\begin{equation}\label{S_equ}
	1 - zG(z) + G(z) \prod_{l=1}^{M} \left(1 - y_l + z y_l G(z)\right) = 0.
\end{equation}
A careful study of this equation yields explicit expressions for the limiting density, providing the foundation for understanding the global distribution of singular values. Second, we use this density to analyze the local statistics in the bulk. The mean spectral density determines the appropriate scaling and is essential for the asymptotic analysis of the correlation kernel 
\begin{equation}\label{logkn}
	K_{M,N}(x,y) = \int_{c-i\infty}^{c+i\infty}\frac{ds}{2\pi i}\oint_{\Sigma}\frac{dt}{2\pi i}\frac{e^{xt-ys}}{s-t}\frac{\Gamma(t)}{\Gamma(s)}\prod_{j=0}^M\frac{\Gamma(s+N+\nu_j)}{\Gamma(t+N+\nu_j)}.
\end{equation} for the log-transformed matrix $\log(Y^*_M Y_M)$.
Here $\Sigma$ is a counter-clockwise contour encircling $0, -1, \dots, -N+1$ and $c$ is chosen
to make the vertical s-contour disjoint from $\Sigma$.

Our main result establishes that the local  statistics in the bulk  follow the sine kernel. This extends classical Wishart matrix results to non-square multiplicative chains, revealing how the rectangularity parameters $y_l$ govern both global and local spectral behavior.
A related limit phenomenon has been widely studied in diverse random matrix product ensembles \cite{AA22}\cite{AP23}\cite{BE25}\cite{GS22}\cite{GL25}
\cite{Jq17}\cite{LW24}, revealing universal  patterns in mathematical perspective of spectral statistics.

\subsection{Main results} 
We now consider the special case where $y_l = y$ for all $l$. Under this condition, equation \eqref{S_equ} simplifies to
\begin{equation}\label{algformula}
	1 - zG(z) + G(z) \left(1 - y + z y G(z)\right)^M = 0.
\end{equation}
Applying the change of variables $W = zG(z) + 1/y - 1$, we obtain
\begin{equation}\label{re_equa}
	y^M \left(W + 1 - \frac{1}{y}\right) W^M = \left(W - \frac{1}{y}\right) z.
\end{equation}
Substituting $W = r e^{i\theta}$ yields the parametric expression
\begin{equation}\label{r_para}
	r(\theta) = \frac{\left(\frac{1}{y} - 1\right)\sin((M-1)\theta) + \frac{1}{y}\sin((M+1)\theta) + \sqrt{\Delta}}{2\sin(M\theta)},
\end{equation}
where the discriminant $\Delta$ is given by
\begin{align}
	\Delta &= \left(\frac{1}{y} - 1\right)^2 \sin^2((M-1)\theta) + \frac{1}{y^2} \sin^2((M+1)\theta)+  \frac{1-y}{y^2}\big(\cos(2M\theta)+\cos(2\theta)-2\big).
\end{align}
Furthermore, we derive the parametrization
\begin{equation}\label{x_theta}
	x(\theta) = \frac{y^M}{r \sin\theta} \left( r^{M+1} \sin((M+1)\theta) + \left(1 - \frac{1}{y}\right) r^M \sin(M\theta) \right),
\end{equation}
from which the limiting spectral density follows as
\begin{equation}
	\rho(\theta) = \frac{1}{\pi y^M} \frac{(r \sin\theta)^2}{r^{M+1} \sin((M+1)\theta) + \left(1 - \frac{1}{y}\right) r^M \sin(M\theta)}, \quad \theta \in (0,\pi).
\end{equation}
We next study the support of the limiting spectral distribution. Evaluating at the endpoints
\begin{align}
	r(0) &= \frac{M+1+2M\left(\frac{1}{y}-1\right)+\sqrt{(M+1)^2+4M\left(\frac{1}{y}-1\right)}}{2M}, \\
	r(\pi) &= \frac{-M-1-2M\left(\frac{1}{y}-1\right)+\sqrt{(M+1)^2+4M\left(\frac{1}{y}-1\right)}}{2M}.
\end{align}
The spectral edges are given by
\begin{align}
	x_{\pm} &= \frac{y^{M+1}}{2^{M+1}M^M} \left(M+1+2\left(\frac{1}{y}-1\right)\pm\sqrt{(M+1)^2+4M\left(\frac{1}{y}-1\right)}\right) \notag \\
	&\quad \times \left(M+1+2M\left(\frac{1}{y}-1\right)\pm\sqrt{(M+1)^2+4M\left(\frac{1}{y}-1\right)}\right)^M.
\end{align}
These spectral edges can also be determined from the algebraic resolvent formula \eqref{algformula} (see e.g., \cite{AIK13}). The edges satisfy the bounds
\begin{equation}
	0 \leq x_{-} < 1 < x_{+} \leq \frac{(M+1)^{M+1}}{M^M},
\end{equation}
where equality holds if and only if $y=1$.
The parametrization $x(\theta)$ is  decreasing in $\theta$, establishing a bijection between $(0,\pi)$ and $(x_{-},x_{+})$. Thus, for each $x_0 \in (x_{-},x_{+})$, there exists a unique $\psi \in (0,\pi)$ such that $x_0 = x(\psi)$.

For a determinantal point process with correlation kernel $K_{M,N}(x,y)$, the $n$-point correlation functions are given by
\begin{equation}
	R_{M,N}^{(n)}(x_1,\dots,x_n) = \det\left[K_{M,N}(x_i,x_j)\right]_{i,j=1}^n.
\end{equation}
Recall the definition of the sine kernel (see e.g., \cite{AGZ10})
\begin{equation}
	K_{\text{sin}}(x,y) = \frac{\sin(\pi(x-y))}{\pi (x-y)}.
\end{equation}
We now present our main results for the case $y \neq 1$. For the special case $y = 1$, we refer the reader to \cite{LWW23, LWZ16}.
\begin{theorem}\label{bulkthm}
	Assume $\lim_{N \to \infty} \Delta_{M,N} = 0$. For $\theta \in (0, \pi)$, let
	\begin{equation}\label{subcgd}
		x_i = \sum_{j=1}^M\log(N+v_j) + \log x(\theta) + \frac{\xi_i}{\rho_{M,N}}, \quad i=1,2,\dots,n,
	\end{equation}
 and the scaling factor is
	\begin{equation}
		\rho_{M,N} = \rho_{M,N}(\theta) = \frac{N r \sin\theta}{\pi},
	\end{equation}
		where $x(\theta)$ is defined in \eqref{x_theta} and $r$ is given by \eqref{r_para}. The following limits for correlation functions of eigenvalues of $\log(Y^*_M Y_M)$ 
	\begin{equation}
		\lim_{N \to \infty} (\rho_{M,N})^{-n} R^{(n)}_{M,N}(x_1,\dots,x_n) = \det\left[K_{\mathtt{sin}}(\xi_i,\xi_j)\right]_{i,j=1}^n,
	\end{equation}
hold uniformly for $\xi_1,\dots,\xi_n$ in any compact subset of $\mathbb{R}$.
\end{theorem}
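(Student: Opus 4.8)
The plan is to perform a steepest-descent (saddle-point) analysis on the double-contour integral representation \eqref{logkn} for the kernel $K_{M,N}(x,y)$ after inserting the scaling \eqref{subcgd}. Write the integrand, using the Gamma-function asymptotics (Stirling's formula) valid when the argument has large real part, as $\exp\bigl(N[F(s)-F(t)]+\text{lower order}\bigr)$ times the elementary factors $e^{xt-ys}/(s-t)$ and $\Gamma(t)/\Gamma(s)$. The key point is that $\Delta_{M,N}\to 0$ forces $N+\nu_j\to\infty$ for every $j$, so after dividing $s,t$ by $N$ (or an appropriate rescaling $s = N\sigma$, $t=N\tau$) the sum $\sum_{j}\log\Gamma(\cdot+N+\nu_j)$ is, to leading order, governed by a single phase function; one identifies the limiting exponent $F$ whose derivative $F'$ vanishes precisely at a point $\sigma_0$ related to the parametrization. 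Concretely, the stationarity equation $F'(\sigma)=0$ should reduce, after the substitution $W=zG(z)+1/y-1$ used to derive \eqref{re_equa}, to the algebraic relation \eqref{algformula} with $z=x(\theta)$, so the relevant saddle is $\sigma_0 = G(x(\theta))$ (a non-real point when $x(\theta)\in(x_-,x_+)$, i.e. in the bulk), with its complex conjugate $\bar\sigma_0$ as the second saddle.

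Next I would deform the $t$-contour $\Sigma$ and the vertical $s$-contour so that both pass through the conjugate pair of saddles $\sigma_0,\bar\sigma_0$ along steepest-descent directions, checking that the contours can be moved without crossing poles (the poles of $\Gamma(t)$ at nonpositive integers stay enclosed, and $c$ can be shifted appropriately). Away from a shrinking neighborhood of the saddles the integrand is exponentially negligible, so the whole contribution localizes near $\sigma_0$ and $\bar\sigma_0$. Expanding $F$ to second order there, the Gaussian pieces are killed by the conjugate-pair cancellation structure, and the surviving contribution comes from the linear terms in the local variables, which — because of the precise choice of the scaling factor $\rho_{M,N}=N r\sin\theta/\pi$ and the recentering $\sum_j\log(N+\nu_j)+\log x(\theta)$ — assemble into $\int\!\int \frac{e^{\,i\pi(\xi_i-\xi_j)(\cdots)}}{\cdots}$, i.e. exactly the sine kernel $K_{\mathtt{sin}}(\xi_i,\xi_j)$. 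The density identity $\rho(\theta)=\tfrac{1}{\pi y^M}(r\sin\theta)^2/\bigl(r^{M+1}\sin((M+1)\theta)+(1-1/y)r^M\sin(M\theta)\bigr)$ together with $x(\theta)$ from \eqref{x_theta} is what guarantees that the imaginary part of the saddle value produces the factor $\pi$ in the sine kernel; verifying this bookkeeping is where \eqref{r_para}–\eqref{x_theta} get used. Finally, the convergence of correlation functions $R^{(n)}_{M,N}=\det[K_{M,N}(x_i,x_j)]$ follows from the kernel limit by the continuity of the determinant, and uniformity on compact $\xi$-sets comes from uniformity of the saddle-point estimates.

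The main obstacle I anticipate is twofold. First, justifying the uniform Stirling-type asymptotics for the product $\prod_{j=0}^M \Gamma(s+N+\nu_j)/\Gamma(t+N+\nu_j)$ when $M=M(N)$ may itself grow and the $\nu_j$ are spread out: one needs the error terms to be controlled by $\Delta_{M,N}$ and to vanish, which requires carefully tracking how the individual $1/(N+\nu_j)$ corrections aggregate — this is precisely the role of the hypothesis $\Delta_{M,N}\to 0$, and making the remainder estimates uniform in the contour variables near the saddles is delicate. Second, proving that the global contour deformation is admissible — that the steepest-descent paths through $\sigma_0,\bar\sigma_0$ can be realized as deformations of $\Sigma$ and the vertical line without the descending contour running into a region where $\mathrm{Re}\,F$ increases or into the poles — requires a sign analysis of $\mathrm{Re}\,F$ on the full plane; here the monotonicity of $x(\theta)$ and the edge bounds $0\le x_-<1<x_+$ are used to confirm that a genuine bulk saddle (with $\sigma_0\notin\mathbb{R}$) exists for every $\theta\in(0,\pi)$ and that the topology of the level sets is the standard one. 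Once these two analytic points are secured, the extraction of the sine kernel is essentially the same computation as in the square case \cite{LWW23,LWZ16}, with the parametrization \eqref{r_para}–\eqref{x_theta} substituting for the Fuss–Catalan parametrization.
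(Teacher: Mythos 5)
Your high-level plan — rewrite the kernel as a double contour integral with a large-$N$ exponent, identify the bulk saddle as a complex-conjugate pair determined by the Stieltjes transform relation, deform both contours to pass through the saddles, and localize — is the right architecture, and your two "main obstacles" (uniformity of Stirling for growing $M$ via $\Delta_{M,N}\to0$, and admissibility of the deformation) are genuine concerns that the paper also has to address. However, there is a concrete gap in the mechanism by which you extract the sine kernel, and it is the central step of the proof.

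You claim the sine kernel arises from the quadratic/linear local expansion at the saddles ("the Gaussian pieces are killed by the conjugate-pair cancellation structure, and the surviving contribution comes from the linear terms in the local variables"). That is not how the sine kernel appears here, and in fact the full saddle-point localization of the double integral contributes only an $\mathcal{O}(N^{-2/5})$ error term. The sine kernel comes from the pole of $\frac{1}{s-t}$ when the $s$-contour is pushed through the $t$-contour. In the paper's execution, the closed $t$-contour $\Sigma_{1/N}$ is deformed so that it becomes two closed loops that nearly touch at the saddle points $q_M(\theta)=r(\theta)e^{i\theta}-\tfrac1y$ and $\overline{q_M(\theta)}$, separated by two short vertical segments $\Sigma_L,\Sigma_R$ (the "inner part" $\Sigma_{\mathrm{in}}$). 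The vertical $s$-contour $C_\theta$ threads between the two loops. Applying Cauchy's theorem on $C_\theta\times\Sigma_{\mathrm{in}}$, and letting the gap parameter $\delta\to0$, picks up exactly the residue at $s=t$ integrated along the segment from $\overline{q_M(\theta)}$ to $q_M(\theta)$; that line integral, with $\Im q_M(\theta)=r\sin\theta$ and the scaling $\rho_{M,N}=Nr\sin\theta/\pi$, is
\[
-\,\frac{\rho_{M,N}}{N}\,e^{\,N\frac{\xi-\eta}{\rho_{M,N}}\Re q_M(\theta)}\,
\frac{\sin\!\bigl(\pi(\xi-\eta)\bigr)}{\pi(\xi-\eta)},
\]
which is the sine kernel up to a benign conjugation factor. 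The saddle-point analysis (your Gaussian step, with the contour monotonicity lemmas) is then used to bound the remaining double integral $I_1$ over $C_\theta\times\Sigma_{\mathrm{out}}$ by $\mathcal{O}(N^{-2/5})$. So your sentence "checking that the contours can be moved without crossing poles" points in the wrong direction: you must arrange for the $s$- and $t$-contours to (nearly) cross precisely at the two saddles, because that crossing \emph{is} the source of the answer.

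Two smaller points. First, the saddle should be identified as $q_M(\theta)=r(\theta)e^{i\theta}-\tfrac1y$, satisfying $g'_M(q_M(\theta);\theta)=0$ where $g_M$ is the leading-order exponent after the rescaling $s\to sN,\ t\to tN$; your description $\sigma_0=G(x(\theta))$ is not literally the variable that sits on the deformed contour (it is related by the change of variables $W=zG(z)+1/y-1$ but the proof works directly with $q_M$). Second, when you do bound the outer part, you cannot use a single steepest-descent direction globally: the paper needs explicit monotonicity of $\Re g_M$ along the parametrized arcs $\Sigma_\pm=\{q_M(\phi)\}$ and along the vertical line $C_\theta$ (Lemma \ref{glolemma}), plus a local quadratic-growth estimate in $N^{-2/5}$-balls around the saddles (Lemma \ref{lolemma}), to conclude $I_1=\mathcal{O}(N^{-2/5})$ while tracking the $1/(s-t)$ singularity as a principal value. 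These ingredients are implicit in your "sign analysis of $\Re F$", but without the residue decomposition your plan as written would produce a vanishing answer rather than the sine kernel.
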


The limiting spectral density can be derived from the limit of the 1-point correlation function.

\begin{corollary}
	For $\theta \in (0, \pi)$, let
	\begin{equation}
		x_1 = \sum_{j=1}^M\log(N+v_j) + \log x(\theta) + \frac{\xi_1}{\rho_{M,N}},
	\end{equation}
	with $\xi_1$ in any compact subset of $\mathbb{R}$. Then the limiting mean density satisfies
	\begin{equation}
		\lim_{N \to \infty} \frac{1}{N x(\theta)} R^{(1)}_{M,N}(x_1) = \lim_{N \to \infty} \frac{1}{N x(\theta)} K_{M,N}(x_1,x_1) = \rho(\theta).
	\end{equation}
\end{corollary}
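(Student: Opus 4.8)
The plan is to deduce this corollary directly from Theorem \ref{bulkthm} with $n=1$. Setting $n=1$ in the theorem and taking $\xi_1=0$ (which lies in any compact neighborhood of the origin), we have $x_1 = \sum_{j=1}^M\log(N+v_j) + \log x(\theta)$, and Theorem \ref{bulkthm} gives
\begin{equation*}
	\lim_{N\to\infty}(\rho_{M,N})^{-1}R^{(1)}_{M,N}(x_1) = K_{\mathtt{sin}}(0,0) = \frac{\sin(0)}{0} = 1,
\end{equation*}
where $K_{\mathtt{sin}}(0,0)=1$ is read off as the limit $\lim_{u\to 0}\sin(\pi u)/(\pi u)=1$. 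Since $R^{(1)}_{M,N}(x_1)=K_{M,N}(x_1,x_1)$ by the definition of the $1$-point correlation function of a determinantal process, this already yields $\lim_{N\to\infty}(\rho_{M,N})^{-1}K_{M,N}(x_1,x_1)=1$.

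The remaining step is purely to rewrite the normalization. By definition $\rho_{M,N}=\rho_{M,N}(\theta)=\dfrac{N r\sin\theta}{\pi}$, so
\begin{equation*}
	\frac{1}{N x(\theta)}K_{M,N}(x_1,x_1) = \frac{\rho_{M,N}}{N x(\theta)}\cdot\frac{K_{M,N}(x_1,x_1)}{\rho_{M,N}} = \frac{r\sin\theta}{\pi\, x(\theta)}\cdot\frac{K_{M,N}(x_1,x_1)}{\rho_{M,N}}.
\end{equation*}
Letting $N\to\infty$, the second factor tends to $1$ by the computation above, hence
\begin{equation*}
	\lim_{N\to\infty}\frac{1}{N x(\theta)}K_{M,N}(x_1,x_1) = \frac{r\sin\theta}{\pi\,x(\theta)}.
\end{equation*}
To finish, I would verify that this equals $\rho(\theta)$ by substituting the explicit formula \eqref{x_theta} for $x(\theta)$: writing $x(\theta)=\dfrac{y^M}{r\sin\theta}\bigl(r^{M+1}\sin((M+1)\theta)+(1-\tfrac{1}{y})r^M\sin(M\theta)\bigr)$, one gets
\begin{equation*}
	\frac{r\sin\theta}{\pi\,x(\theta)} = \frac{(r\sin\theta)^2}{\pi\, y^M\bigl(r^{M+1}\sin((M+1)\theta)+(1-\tfrac{1}{y})r^M\sin(M\theta)\bigr)} = \rho(\theta),
\end{equation*}
which is exactly the stated density. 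The first equality in the corollary, $\lim_{N\to\infty}\frac{1}{Nx(\theta)}R^{(1)}_{M,N}(x_1) = \lim_{N\to\infty}\frac{1}{Nx(\theta)}K_{M,N}(x_1,x_1)$, is immediate from $R^{(1)}_{M,N}=K_{M,N}(\cdot,\cdot)$ on the diagonal.

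There is essentially no obstacle here — the corollary is a formal consequence of the $n=1$ case of Theorem \ref{bulkthm} together with an algebraic simplification. The only point requiring a word of care is that the theorem's convergence is stated to hold uniformly for $\xi_1$ in compact subsets of $\mathbb{R}$, so the evaluation at the single point $\xi_1=0$ is legitimate and the limit is the claimed constant $1$; everything else is bookkeeping with the definition of $\rho_{M,N}$ and the explicit expression for $x(\theta)$.
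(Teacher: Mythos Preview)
Your proposal is correct and matches the paper's approach: the corollary is presented there as an immediate consequence of the $n=1$ case of Theorem~\ref{bulkthm}, and your verification that $\rho_{M,N}/(Nx(\theta))=\rho(\theta)$ via the explicit formula \eqref{x_theta} is exactly the bookkeeping needed. The only unnecessary restriction is specializing to $\xi_1=0$; since $K_{\mathtt{sin}}(\xi_1,\xi_1)=1$ for every $\xi_1$, the identical argument works for arbitrary $\xi_1$ in a compact set, which is what the corollary actually asserts.
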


\begin{remark}
	The proof of Theorem \ref{bulkthm} (or, equivalently, the results in \cite{LWW23}) shows that the conclusion remains valid under the more general conditions $1 \ll M \ll N$ and $\Delta_{M,N} \to 0$.
\end{remark}
These results extend classical results from square matrix products to a special class of rectangular ensembles, with potential implications for stability analysis in deep neural networks and communication systems. Our central contribution is the novel parameterization of the limiting spectral density. This framework enables the contour construction and saddle-point analysis necessary to derive the bulk statistics-a proof of considerable complexity.
	\section{Proof of Theorem    \ref{bulkthm}}\label{sect.2}
The asymptotic analysis for the $n$-correlation functions of $\log(Y_M^*Y_M)$ depends on the kernel \eqref{logkn}, which is our main focus. The primary distinction of our approach from the analogous steepest descent method in \cite{LWW23} rests on two aspects: firstly, the application of our  limiting spectral density for scaling, and secondly, the parameterization-guided construction of the integration contour that validates the saddle-point method.

We begin by outlining the proof strategy. The contour $\Sigma$ is partitioned into ``out" and ``in" parts, denoted by $\Sigma_{\text{out}}$ and $\Sigma_{\text{in}}$, respectively. The kernel decomposes as $K_{M,N}(x,y) = I_1 + I_2$, where:
\begin{itemize}
	\item $I_1$, defined on $\mathcal{C} \times \Sigma_{\text{out}}$, is evaluated via a saddle-point analysis and shown to be negligible;
	\item $I_2$, defined on $\mathcal{C} \times \Sigma_{\text{in}}$, is computed using Cauchy's theorem and is shown to yield the principal contribution .
\end{itemize}
\begin{proof}[Proof of Theorem \ref{bulkthm}]
	We determine the asymptotic behavior of the integral using a three-step method.
	
	\noindent\textbf{Step 1: Contour constructions and integral decomposition.}
After making the substitution
\begin{align}
	x &= g(\xi) = \sum_{j=1}^M \log(N + v_j) + \log x(\theta) + \frac{\xi}{\rho_{M,N}}, \\
	y &= g(\eta) = \sum_{j=1}^M \log(N + v_j) + \log x(\theta) + \frac{\eta}{\rho_{M,N}},
\end{align}
and applying the change of variables $s \to sN$, $t \to tN$,
we obtain
\begin{equation}
	K_{M,N}(g(\xi), g(\eta)) = N \int_{c-i\infty}^{c+i\infty} \frac{ds}{2\pi i} \oint_{\Sigma_{1/N}} \frac{dt}{2\pi i} \frac{1}{s - t} e^{f_{M,N}(s) - f_{M,N}(t)} e^{N \frac{\xi t - \eta s}{\rho_{M,N}}}.
\end{equation}
Here, $\Sigma_{1/N}$ is a positively oriented contour enclosing $0, -1/N, -2/N, \dots, -(N-1)/N$, and $c$ is chosen such that the vertical contour avoids $\Sigma_{1/N}$. The function $f_{M,N}(z)$ is defined by
\begin{equation}
	f_{M,N}(z) = \sum_{j=0}^M \log \Gamma(zN + N + v_j) - \log \Gamma(zN) - zN\left( \log x(\theta) + \sum_{j=1}^M \log(N + v_j) \right).
\end{equation}
Using Stirling's formula, we have the uniform expansion as $z \to \infty$ in the sector $|\arg(z)| \leq \pi - \epsilon$:
\begin{equation}
	\log \Gamma(z) = \left(z - \tfrac{1}{2}\right) \log z - z + \log \sqrt{2\pi} + \frac{1}{12z} + O\left(\frac{1}{z^3}\right).
\end{equation}
This yields the asymptotic expansion
\begin{equation}
	f_{M,N}(z) = N g_M(z; \theta) + c_{M,N}(z) + O(\Delta_{M,N}),
\end{equation}
where
\begin{align}\label{defg_M}
	g_M(z; \theta) &= (z+1)(\log(z+1) - 1) + M\left(z + \tfrac{1}{y}\right)\left(\log\left(z + \tfrac{1}{y}\right) - 1\right) \notag \\
	&\quad - z(\log z - 1) + M z \log y - z \log x(\theta),
\end{align}
and
\begin{equation}
	c_{M,N}(z) = \sum_{j=1}^M N_j \log N - \frac{1}{2} \sum_{j=0}^M \log(z + N_j) + \frac{1}{2} \log z + \frac{M}{2} \log(\sqrt{2\pi}).
\end{equation}
The derivatives of $g_{M}(z;\theta)$ are
\begin{align}
	g'_M(z; \theta) &= \log \left( \frac{(z+1)(z + \frac{1}{y})^M y^M}{z x(\theta)} \right), \\
	g''_M(z; \theta) &= \frac{1}{z+1} + \frac{M}{z + \frac{1}{y}} - \frac{1}{z}.
\end{align}
For $\theta \in (0, \pi)$, we define
\begin{equation}
	q_M(\theta) = r(\theta) e^{i\theta} - \frac{1}{y}, \quad h_M(\theta) = \Re q_M(\theta),
\end{equation}
and note that $h_M$ maps $[0, \pi)$ bijectively to $(r(\pi) - 1/y, r(0) - 1/y]$. From the resolvent equation \eqref{re_equa}, it is easy to see that
\begin{equation*}
	g'_M(q_M(\theta); \theta) = 0.
\end{equation*}
Based on the properties of $g_M(z;\theta)$, we now define the contours
\begin{align*}
	\Sigma_+ &= \{ t = q_M(\theta) \mid \theta \in [0, \pi) \}, \\
	\Sigma_- &= \{ t = \overline{q_M(\theta)} \mid \theta \in [0, \pi) \}.
\end{align*}
Let $C \in (0,1)$ (independent of $M$ and $N$) be such that
$\sum_{j=0}^M \log\left| \frac{-CN + N + v_j}{N + v_j} \right| + \log(x(\theta)) < 0$.
Since $h_M$ is bijective, its inverse $h_M^{-1}$ is well-defined. Then, for a sufficiently small $\delta > 0$, we define the following contours using $\Sigma_\pm$:
\begin{align*}
	\Sigma^1_\pm &= \left\{ t \in \Sigma_\pm \mid 0 \leq \pm \arg t \leq \theta - \delta \right\}, \\
	\Sigma^2_\pm &= \left\{ t \in \Sigma_\pm \mid \pm \arg t > \theta + \delta \text{ and } \Re t \geq -C \right\}, \\
	\Sigma^3_\pm &= \left\{ t = x \pm i \Im q_M(h_M^{-1}(-C)) \mid x \in [-1 + \tfrac{1}{2N}, -C] \right\}, \\
	\Sigma^4 &= \left\{ t = -1 + \tfrac{1}{2N} + iy \mid y \in [-\Im q_M(h_M^{-1}(-C)), \Im q_M(h_M^{-1}(-C))] \right\}.
\end{align*}
Let $\Sigma_L$ be the vertical segment connecting the right endpoints of $\Sigma^2_+$ and $\Sigma^2_-$, and let $\Sigma_R$ be the vertical segment connecting the left endpoints of $\Sigma^1_+$ and $\Sigma^1_-$. Therefore, we can deform $\Sigma_{1/N}$ into
\begin{equation}
	\Sigma_1^+ \cup \Sigma_2^+ \cup \Sigma_3^+ \cup \Sigma_4 \cup \Sigma_1^- \cup \Sigma_2^- \cup \Sigma_3^- \cup \Sigma_L \cup \Sigma_R.
\end{equation}
The contour for $s$ is chosen as the vertical line
\begin{equation}
	C_\theta = \{ h_M(\theta) + iy \mid y \in \mathbb{R} \}.
\end{equation}
Thus, $\Sigma_{1/N}$ is the union of two separate closed contours $\Sigma_1^+ \cup \Sigma_R \cup \Sigma_1^-$ and $\Sigma_2^+ \cup \Sigma_3^+ \cup \Sigma_4 \cup \Sigma_3^- \cup \Sigma_2^- \cup \Sigma_L$. The contour for $s$ lies between these two closed contours; see Figure \ref{fig}. Both contours are positively oriented.
\begin{figure}
	\centering
	\includegraphics[width=0.7\textwidth]{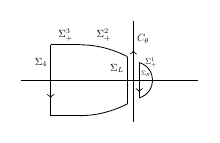}
	\caption{Schematic contours in the proof of Theorem \ref{bulkthm}}
	\label{fig}
\end{figure}
We divide $\Sigma_{1/N}$ into the ``outer'' part $\Sigma_{\text{out}} = \Sigma_1^+ \cup \Sigma_2^+ \cup \Sigma_3^+ \cup \Sigma_4 \cup \Sigma_1^- \cup \Sigma_2^- \cup \Sigma_3^-$ and the ``inner'' part $\Sigma_{\text{in}} = \Sigma_L \cup \Sigma_R$. Accordingly, the kernel $\frac{1}{N}K_{M,N}(x,y)$ splits into two parts
\begin{align}
	I_1 &= \lim_{\delta \to 0} \int_{C_\theta} \frac{ds}{2\pi i} \oint_{\Sigma_{\text{out}}} \frac{dt}{2\pi i} \frac{1}{s - t} e^{f_{M,N}(s) - f_{M,N}(t)} e^{N \frac{\xi t - \eta s}{\rho_{M,N}}}, \\
	I_2 &= \lim_{\delta \to 0} \int_{C_\theta} \frac{ds}{2\pi i} \oint_{\Sigma_{\text{in}}} \frac{dt}{2\pi i} \frac{1}{s - t} e^{f_{M,N}(s) - f_{M,N}(t)} e^{N \frac{\xi t - \eta s}{\rho_{M,N}}}.
\end{align}

\subsection*{Step 2: Asymptotics of the local part}

Applying the residue theorem, we obtain
\begin{align}
	I_2 &= -\int_{q_M(-\theta)}^{q_M(\theta)} \frac{ds}{2\pi i} e^{N \frac{\xi - \eta}{\rho_{M,N}} s} \sin \left( N \frac{\xi - \eta}{\rho_{M,N}} \Im q_M(\theta) \right) \notag \\
	&= -\rho_{M,N} \frac{1}{N} e^{N \frac{\xi - \eta}{\rho_{M,N}} \Re q_M(\theta)} \frac{\sin \left( N \frac{\xi - \eta}{\rho_{M,N}} \Im q_M(\theta) \right)}{\pi (\xi - \eta)} \notag \\
	&= -\rho_{M,N} \frac{1}{N} \exp \left( \pi (\xi - \eta) \left( \cot \theta - \frac{1}{y r(\theta) \sin \theta} \right) \right) \frac{\sin (\pi (\xi - \eta))}{\pi (\xi - \eta)}.
\end{align}
	
	\noindent\textbf{Step 3: Estimates of the global part.}
	We now prove the estimate
	\begin{equation*}
		I_1 = \mathcal{O}(N^{-2/5}).
	\end{equation*}
	The remaining part involves steepest-descent analysis. We need estimates for $\Re f_{M,N}(t)$ on $\Sigma_{\text{out}}$ and $\Re f_{M,N}(s)$ on $C_\theta$.

	\noindent\textbf{Global estimates:}
	
	\begin{enumerate}
		\item For $t \in \Sigma_\pm^3$, we examine the monotonicity of $\Re f_{M,N}(x+iy)$ with respect to $x$. By
		\begin{equation}
			\frac{d}{du} \Re f_{M,N}(u+iv) = \Re \left( f_{M,N}'(u+iv) \right),
		\end{equation}
		and
		\begin{equation}
			\frac{1}{N} f_{M,N}'(z) = \sum_{j=0}^M \psi(zN+N_j) - \log(x(\theta)) - \psi(zN) - \sum_{j=1}^M \log(N_j),
		\end{equation}
		and using the asymptotic expansion of $\psi$ and the assumption on $C \in (0,1)$, we find that $\Re (f_{M,N}'(t)) < 0$ for $t \in \Sigma_\pm^3$. Hence, $\Re f_{M,N}(t)$ is strictly decreasing as a function of $x$ along $\Sigma_\pm^3$, and thus attains its minimum at the right endpoints $-C \pm i \Im q_M(h_M^{-1}(-C))$.
		
		\item For $t \in \Sigma^4$, we examine the monotonicity along the $y$-axis.
		By	\begin{equation}
			\frac{d\Re f_{M,N}(u+iv)}{dv}=-\Im \frac{df_{M,N}(z)}{dz}|_{z=u+iv}
		\end{equation}
	and 	\begin{equation}
		\Im f'_{M,N}(z)=-\sum_{j=0}^M \sum_{n=0}^{\infty}\Im	\frac{1}{tN+N+v_j+n}+\sum_{n=0}^{\infty}\Im \frac{1}{tN+n}	\end{equation}
		 let $\sigma = tN+N+v_j$, so $\Re \sigma = \frac{1}{2}+v_j$. Then,
		For $t \in \Sigma^4$, $\Im f'_{M,N}(t) > 0$ if $\Im t > 0$, and $\Im f'_{M,N}(t) < 0$ if $\Im t < 0$. Hence, $\Re f_{M,N}(t)$ attains its maximum at $t =  -1+ \frac{M+1}{2N}$. Moreover, $\Re f_{M,N}(-1 + \frac{1}{2N}) < \Re f_{M,N}(q_M(\pm \theta)) - \epsilon$ for some $\epsilon > 0$.
		
We require the following lemma to verify the extrema of $g_M$ with respect to $s \in C_\theta$ and $t \in \Sigma_{\pm}$. Its proof will be provided at the end of this section.

\begin{lemma}\label{glolemma}
	Let $g_M(t; \theta)$ be as defined in \eqref{defg_M}. Then:
	\begin{enumerate}
		\item Let $\theta \in (0, \pi)$. As $t$ traverses $\Sigma_+$ (respectively, $\Sigma_-$), the real part $\Re g_M(t; \theta)$ attains its unique minimum at $q_M(\theta)$ (respectively, $\overline{q_M(\theta)}$). Moreover,
		\begin{equation*}
			\frac{d}{d\phi} \Re g_M(q_M(\phi); \theta)
			\begin{cases}
				< 0, & \phi \in (0, \theta), \\
				> 0, & \phi \in (\theta, \pi),
			\end{cases}
			\quad
			\frac{d}{d\phi} \Re g_M(\overline{q_M(\phi)}; \theta)
			\begin{cases}
				< 0, & \phi \in (0, \theta), \\
				> 0, & \phi \in (\theta, \pi).
			\end{cases}
		\end{equation*}
		
		\item Let $\theta \in (0, \pi)$. As $s$ traverses $C_\theta$, the real part $\Re g_M(s; \theta)$ attains its global maximum at $q_M(\theta)$ and $\overline{q_M(\theta)}$. Moreover,
		\begin{equation*}
			\frac{d}{dv} \Re g_M(\Re q_M(\theta) + iv; \theta)
			\begin{cases}
				< 0, & v > \Im q_M(\theta), \\
				> 0, & v \in (0, \Im q_M(\theta)), \\
				< 0, & v \in (\Im \overline{q_M(\theta)}, 0), \\
				> 0, & v < \Im \overline{q_M(\theta)}.
			\end{cases}
		\end{equation*}
	\end{enumerate}
\end{lemma}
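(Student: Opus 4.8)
The plan is to reduce everything to the already-established strict monotonicity of $x(\theta)$ and of $h_M(\theta)$, together with the explicit form of $g''_M$, so I begin with part (1), whose only point is that $g'_M$ is \emph{real} along $\Sigma_\pm$. Since the $\theta$-dependence of $g_M(\cdot;\theta)$ lies only in the linear term $-z\log x(\theta)$, one has $g'_M(z;\theta)-g'_M(z;\phi)=\log x(\phi)-\log x(\theta)$; combined with $g'_M(q_M(\phi);\phi)=0$ this gives $g'_M(q_M(\phi);\theta)=\log\bigl(x(\phi)/x(\theta)\bigr)\in\mathbb R$ for all $\phi\in(0,\pi)$. Because $q_M(\phi)\notin\{0,-1,-1/y\}$ on $(0,\pi)$, the function $g_M(\cdot;\theta)$ is holomorphic there, and the chain rule yields
\begin{equation*}
\frac{d}{d\phi}\Re g_M(q_M(\phi);\theta)=\Re\bigl(g'_M(q_M(\phi);\theta)\,q'_M(\phi)\bigr)=\log\!\frac{x(\phi)}{x(\theta)}\cdot h'_M(\phi).
\end{equation*}
Since $x$ and $h_M$ are strictly decreasing, $\log(x(\phi)/x(\theta))>0$ for $\phi<\theta$, $<0$ for $\phi>\theta$, while $h'_M<0$; hence the right-hand side is negative on $(0,\theta)$, zero at $\phi=\theta$, and positive on $(\theta,\pi)$, which gives the unique minimum at $q_M(\theta)$ and the stated sign pattern. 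For $\Sigma_-$ I would invoke $\overline{g_M(z;\theta)}=g_M(\bar z;\theta)$, so that $\Re g_M(\overline{q_M(\phi)};\theta)=\Re g_M(q_M(\phi);\theta)$.

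For part (2), set $a=h_M(\theta)=\Re q_M(\theta)$ and $\beta=\Im q_M(\theta)>0$, and study $v\mapsto\Re g_M(a+iv;\theta)$ on $C_\theta$. It is even in $v$ (again by $\overline{g_M(z;\theta)}=g_M(\bar z;\theta)$), tends to $-\infty$ as $|v|\to\infty$ (since $g_M(z;\theta)\sim Mz\log z$), and for $v\neq0$ has derivative $-\Im g'_M(a+iv;\theta)$. Its critical points include $v=\pm\beta$, since $g'_M(q_M(\theta);\theta)=g'_M(\overline{q_M(\theta)};\theta)=0$; moreover, differentiating the part (1) identity at $\phi=\theta$ gives $g''_M(q_M(\theta);\theta)\,q'_M(\theta)=x'(\theta)/x(\theta)$, hence
\begin{equation*}
\Re g''_M(q_M(\theta);\theta)=\frac{x'(\theta)}{x(\theta)}\cdot\frac{h'_M(\theta)}{|q'_M(\theta)|^2}>0,
\end{equation*}
so $v=\pm\beta$ are strict local maxima. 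It remains to show these are the only critical points besides $v=0$, i.e. that $v\mapsto\Im g'_M(a+iv;\theta)$ vanishes on $(0,\infty)$ only at $\beta$. I would use that $\tfrac{d}{dv}\Im g'_M(a+iv;\theta)=\Re g''_M(a+iv;\theta)$ is, after clearing the positive common denominator of $\tfrac{a+1}{(a+1)^2+v^2}+\tfrac{M(a+1/y)}{(a+1/y)^2+v^2}-\tfrac{a}{a^2+v^2}$, a polynomial of degree at most two in $v^2$; hence $\Im g'_M(a+iv;\theta)$ is piecewise monotone on $(0,\infty)$ with at most three pieces. Anchoring this with the limit $\Im g'_M(a+iv;\theta)\to M\pi/2>0$ as $v\to+\infty$, with $\Re g''_M(q_M(\theta);\theta)>0$, and with the value at $v=0$, $\Re g''_M(a;\theta)=-\tfrac1{a(a+1)}+\tfrac M{a+1/y}$, forces exactly one sign change, at $v=\beta$. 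The one delicate point is the sign of that last quantity and of $\Im g'_M(a+i0^+)$, which I would split according to the position of $a$: if $a>0$ or $a<-1$ then $\Im g'_M(a+i0^+)=0$ and $\Re g''_M(a;\theta)<0$ — because $\Re q_M(0)=r(0)-1/y$ equals the larger root $\tfrac{1-M+\sqrt{(M-1)^2+4M/y}}{2M}$ of $u\mapsto Mu^2+(M-1)u-1/y$, $\Re q_M(\pi^-)=-r(\pi)-1/y$ equals its smaller root, and $h_M$ is decreasing, so $a$ lies strictly between the two roots; if $-1<a<0$, then in $\Im g'_M(a+iv;\theta)=\arg(a+1+iv)+M\arg(a+1/y+iv)-\arg(a+iv)$ all three terms are monotone in the sense making $\Im g'_M$ strictly increasing from $-\pi$, so again there is a single zero. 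Either way $\Im g'_M<0$ on $(0,\beta)$ and $\Im g'_M>0$ on $(\beta,\infty)$; oddness gives the signs for $v<0$, the remaining critical point $v=0$ is a local minimum (a corner when $a<0$), and since $\Re g_M\to-\infty$ at $\pm\infty$, the global maxima on $C_\theta$ are precisely $q_M(\theta)$ and $\overline{q_M(\theta)}$, with the derivative signs as stated.

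I expect the main obstacle to be exactly this global count of critical points on $C_\theta$: the behaviour at the three candidate values is immediate from the identity $g''_M(q_M(\theta);\theta)q'_M(\theta)=x'(\theta)/x(\theta)$ and the monotonicity of $x$ and $h_M$, but excluding spurious critical points requires uniform control of the sign of the degree $\le2$ (in $v^2$) numerator of $\Re g''_M$ along the whole line, which is where the explicit formulas for $r(\theta)$ and $x(\theta)$, and ultimately the positivity of $\rho(\theta)$, do the real work. The small but decisive ingredient is the identification of $\Re q_M(0)$ and $\Re q_M(\pi^-)$ with the two roots of $Mu^2+(M-1)u-1/y$, which is a direct comparison of the explicit endpoint values of $r(0)$ and $r(\pi)$ with those roots.
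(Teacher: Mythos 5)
Your proof is correct and follows the same basic skeleton as the paper's (derivative-along-$\Sigma_\pm$ via the $\phi$-parametrization for part (1); first and second $v$-derivatives of $\Re g_M$ along $C_\theta$ for part (2)), but it is materially more careful in several places where the paper's argument is imprecise or incomplete, so the comparison is worth recording. In part (1) your identity $\frac{d}{d\phi}\Re g_M(q_M(\phi);\theta)=\log\bigl(x(\phi)/x(\theta)\bigr)h'_M(\phi)$ is the correct form of the paper's equation $(2.152)$, which writes $(x(\phi)-x(\theta))$ instead of $\log(x(\phi)/x(\theta))$; since the two factors have the same sign, the conclusion is unaffected, but your version is the one that actually follows from $g'_M(q_M(\phi);\phi)=0$. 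In part (2) the paper asserts without proof that $\tfrac{d^2}{dv^2}\Re g_M(\Re q_M(\theta)+iv;\theta)$ has a unique positive zero, and also claims $\tfrac{d}{dv}\Re g_M|_{v=0}=0$; you correctly observe that the latter fails when $-1<\Re q_M(\theta)<0$, where the $\log z$ branch cut makes the one-sided derivative jump to $\pi$ at $v=0^+$, and you handle that sub-case separately via the monotonicity of the three $\arg$-terms. For $a>0$ or $a<-1$ you justify the uniqueness of the sign change of $\Re g''_M(a+iv)$ via the degree-$\le 2$-in-$v^2$ numerator together with the sign at $v=0$ and at $v\to\infty$; the key ingredient you supply, which the paper omits, is the identification of $\Re q_M(0)$ and $\Re q_M(\pi^-)$ with the two roots of $Mu^2+(M-1)u-1/y$, from which $\Re g''_M(a)<0$ follows for $a$ strictly between them. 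Your derivation of $\Re g''_M(q_M(\theta);\theta)>0$ from $g''_M(q_M(\theta);\theta)q'_M(\theta)=x'(\theta)/x(\theta)$ with $x'<0$ and $h'_M<0$ is likewise a clean replacement for the paper's unexplained ``direct computation''. In short: same route, but yours closes real gaps.
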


	\item For $t \in \Sigma_\pm^1 \cup \Sigma_\pm^2$, the function $f_{M,N}(t)$ is uniformly approximated by $g_M$. By part $(a)$ of Lemma~\ref{glolemma}, $g_M(t;\theta)$ attains its unique minimum on $\Sigma_{+}^1 \cup \Sigma_{+}^2$ at $q_M(\theta)$ and its unique minimum on $\Sigma_{-}^1 \cup \Sigma_{-}^2$ at $\overline{q_M(\theta)}$.
	
	\item For $s \in C_\theta$, we partition $C_\theta$ into two segments $C_\theta^1 = \{ s \in C_\theta \mid |\Im s| \leq K \}, \quad
		C_\theta^2 = C_\theta \setminus C_\theta^1$
	where $K$ is a sufficiently large positive constant such that $K > \Im q_M(h_M^{-1}(-C))$. 
	For $s \in C_\theta^1$, $f_{M,N}(s)$ is approximated by $g_M$, which, by part $(b)$ of Lemma~\ref{glolemma}, attains its maximum at $q_M(\theta)$ and $\overline{q_M(\theta)}$. 
	For $s \in C_\theta^2$, the asymptotic expansion applies. Moreover, for sufficiently large $K$, we have $\Im f'_{M,N}(s) > 0$ when $s$ lies in the upper part of $C_\theta^2$ (i.e., $\Im s > 0$), and $\Im f'_{M,N}(s) < 0$ when $s$ lies in the lower part of $C_\theta^2$ (i.e., $\Im s < 0$). 
	Consequently, $f_{M,N}(s)$ decreases at least linearly as $s \to \pm i\infty$ along $C_\theta^2$.
\end{enumerate}
To estimate $\Re f_{M,N}(t)$ and $\Re f_{M,N}(s)$ locally around $q_M(\theta)$ and $\overline{q_M(\theta)}$, we divide the contour $C_\theta$ into 
\begin{align*}
	C_{\text{local},+} &= C_\theta \cap B(q_M(\theta), N^{-2/5}), \\
	C_{\text{local},-} &= C_\theta \cap B(\overline{q_M(\theta)}, N^{-2/5}), \\
	C_{\text{global}} &= C_\theta \setminus (C_{\text{local},+} \cup C_{\text{local},-}),
\end{align*}
and similarly divide $\Sigma_{\text{out}}$ into
\begin{align*}
	\Sigma_{\text{local},+} &= \Sigma_{\text{out}} \cap B(q_M(\theta), N^{-2/5}), \\
	\Sigma_{\text{local},-} &= \Sigma_{\text{out}} \cap B(\overline{q_M(\theta)}, N^{-2/5}), \\
	\Sigma_{\text{global}} &= \Sigma_{\text{out}} \setminus (\Sigma_{\text{local},+} \cup \Sigma_{\text{local},-}).
\end{align*}
Additionally, we define
\[
\Sigma_{\text{local},+}^0 = \Sigma_+ \cap B(q_M( \theta), N^{-2/5}),\quad \Sigma_{\text{local},-}^0 = \Sigma_- \cap B(\overline{q_M( \theta)}, N^{-2/5})
\]

Around $q_M(\theta)$ and $\overline{q_M(\theta)}$, the function $f_{M,N}(t)$ is approximated by $g_M(t; \theta)$. Since $g'_M(q_M(\theta); \theta) = 0$ and $g'_M(\overline{q_M(\theta)}; \theta) = 0$, within the balls $B(q_M(\theta), N^{-2/5})$ and $B(\overline{q_M(\theta)}, N^{-2/5})$, we have the expansions
\[
f_{M,N}(t) = f_{M,N}(q_M(\theta)) + \frac{N g''_M(q_M(\theta))}{2} (t - q_M(\theta))^2 + \mathcal{O}(N^{-1/5}), 
\]
and
\[
f_{M,N}(t) = f_{M,N}(\overline{q_M(\theta)}) + \frac{N g''_M(\overline{q_M(\theta)})}{2} (t - \overline{q_M(\theta)})^2 + \mathcal{O}(N^{-1/5}).
\]

\begin{lemma}\label{lolemma}
	There exists $\epsilon > 0$ such that for all sufficiently large $M$, the following inequalities hold
	\begin{align*}
		\Re \left( g_M(s) - g_M(q_M(\theta)) \right) &\leq -\epsilon |s - q_M(\theta)|^2, \quad \text{for } s \in C_{\text{local},+}, \\
		\Re \left( g_M(t) - g_M(q_M(\theta)) \right) &\geq \epsilon |t - q_M(\theta)|^2, \quad \text{for } t \in \Sigma_{\text{local},+}^0,
	\end{align*}
	and similarly for $\overline{q_M(\theta)}$.
\end{lemma}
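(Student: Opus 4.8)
The plan is to deduce both inequalities from a second-order Taylor expansion of $g_M$ at the critical point $q_M(\theta)$, the substance of the argument being that the relevant directional second derivatives of $\Re g_M$ are nondegenerate uniformly in $M$. Since $g_M'(q_M(\theta);\theta)=0$, for $z$ in the ball $B(q_M(\theta),N^{-2/5})$ we have
\[
g_M(z;\theta)-g_M(q_M(\theta);\theta)=\tfrac12\,g_M''(q_M(\theta);\theta)\,(z-q_M(\theta))^2+R(z),\qquad |R(z)|\le C\,|z-q_M(\theta)|^3,
\]
with $C=\sup_{B(q_M(\theta),N^{-2/5})}|g_M'''(\cdot;\theta)|$. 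Because $g_M''(z;\theta)=\tfrac1{z+1}+\tfrac M{z+1/y}-\tfrac1z$ has poles only at $0,-1,-1/y$, and the explicit parametrization $q_M(\theta)=r(\theta)e^{i\theta}-1/y$ (together with the endpoint values $r(0),r(\pi)$) keeps $q_M(\theta)$ in a fixed compact set at positive distance from those poles for all large $M$, the constant $C$ is bounded uniformly in $M$; hence on the two local arcs $|R(z)|\le CN^{-2/5}|z-q_M(\theta)|^2$. It therefore suffices to show that the leading quadratic form is negative definite along the vertical direction of $C_\theta$ and positive definite along the tangent direction of $\Sigma_+$, with constants independent of $M$.

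For $s\in C_{\text{local},+}$ write $s=q_M(\theta)+i\tau$ with $\tau\in\mathbb R$, so that $(s-q_M(\theta))^2=-\tau^2$ and $|s-q_M(\theta)|=|\tau|$; the first inequality then amounts to the scalar bound $\Re g_M''(q_M(\theta);\theta)\ge 2\epsilon$. For $t\in\Sigma^0_{\text{local},+}=\Sigma_+\cap B(q_M(\theta),N^{-2/5})$, parametrize $t=q_M(\phi)$ with $\phi$ near $\theta$, so that $q_M'(\theta)=(r'(\theta)+ir(\theta))e^{i\theta}$; differentiating twice in $\phi$ and using $g_M'(q_M(\theta);\theta)=0$ gives
\[
\frac{d^2}{d\phi^2}\,\Re g_M(q_M(\phi);\theta)\Big|_{\phi=\theta}=\Re\!\big(g_M''(q_M(\theta);\theta)\,q_M'(\theta)^2\big),
\]
and, combined with the vanishing of $\tfrac{d}{d\phi}\Re g_M(q_M(\phi);\theta)$ at $\phi=\theta$ (the $\phi=\theta$ instance of Lemma~\ref{glolemma}(a)) and with $|q_M'(\theta)|$ bounded above and below uniformly in $M$ (so that $(\phi-\theta)^2\asymp|t-q_M(\theta)|^2$ along the arc), the second inequality reduces to $\Re\big(g_M''(q_M(\theta);\theta)\,q_M'(\theta)^2\big)\ge 2\epsilon$. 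Both scalar bounds are verified directly from the displayed formula for $g_M''$ evaluated at $z=q_M(\theta)$ --- where $q_M(\theta)+1/y=r(\theta)e^{i\theta}$ --- using \eqref{r_para}; they are the second-order counterparts of the monotonicity statements of Lemma~\ref{glolemma} and carry the same sign pattern that makes $q_M(\theta)$ a strict maximum of $\Re g_M$ on $C_\theta$ and a strict minimum of $\Re g_M$ on $\Sigma_+$. Taking $\epsilon$ to be a common positive lower bound for these two quantities over all large $M$, and then shrinking it to absorb the $O(N^{-2/5})$ remainder for $N$ large, proves the first two inequalities. The inequalities at $\overline{q_M(\theta)}$ follow by conjugation: all constants in \eqref{defg_M} being real, $\Re g_M(\bar z;\theta)=\Re g_M(z;\theta)$, while $\overline{q_M(\theta)}$ is the conjugate critical point and $C_{\text{local},-},\Sigma^0_{\text{local},-}$ are the reflections of $C_{\text{local},+},\Sigma^0_{\text{local},+}$ across the real axis.

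The Taylor expansion is routine; the crux --- and the reason this is isolated as a lemma --- is the uniform-in-$M$ nondegeneracy of the Hessian at the saddle, namely that $\Re g_M''(q_M(\theta);\theta)$ and $\Re\big(g_M''(q_M(\theta);\theta)\,q_M'(\theta)^2\big)$ stay bounded away from $0$ as $M\to\infty$ (and that $q_M(\theta)$ stays away from $0,-1,-1/y$ while $|q_M'(\theta)|$ stays of order one). Establishing this forces one to track the $M\to\infty$ asymptotics of $r(\theta)$ and $r'(\theta)$ through the cumbersome expression \eqref{r_para} and its discriminant $\Delta$; it is precisely this quantitative input that upgrades the qualitative monotonicity of Lemma~\ref{glolemma} to the quadratic control of $\Re g_M$ needed for the steepest-descent estimate $I_1=\mathcal O(N^{-2/5})$.
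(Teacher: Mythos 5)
Your proposal follows essentially the same route as the paper: both reduce the two inequalities to positivity of $\Re g_M''(q_M(\theta);\theta)$ (for the vertical direction of $C_\theta$) and of the directional second derivative $\Re\bigl(g_M''(q_M(\theta);\theta)\,(\cos\phi+i\sin\phi)^2\bigr)$ (for the tangent direction of $\Sigma_+$), using the vanishing of $g_M'$ at $q_M(\theta)$ to kill the first-order term; your $\Re\bigl(g_M''\,q_M'(\theta)^2\bigr)$ differs from the paper's expression only by the harmless factor $|q_M'(\theta)|^2$. The one respect in which you are more careful than the paper is in spelling out the Taylor-remainder bound and in flagging explicitly that the lemma requires an $\epsilon$ \emph{uniform} in $M$ rather than mere positivity for each fixed large $M$; however, like the paper you ultimately leave the decisive quantitative check (that the two Hessian quantities stay bounded away from zero as $M\to\infty$, which requires tracking $r(\theta)$, $r'(\theta)$ through \eqref{r_para}) as a claimed ``direct computation,'' so the two arguments are on the same footing there.
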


We now estimate the integral over $C_{\text{local},+} \times \Sigma_{\text{local},+}$; the case for $\overline{q_M(\theta)}$ is analogous. By Lemma \ref{lolemma}, we have
\begin{align}
	&e^{-N \frac{\xi - \eta}{\rho_{M,N}} q_M(\theta)} \lim_{\delta \to 0} \int_{C_{\text{local},+}} \frac{ds}{2\pi i} \int_{\Sigma_{\text{local},+}} \frac{dt}{2\pi i} \frac{1}{s - t} e^{f_{M,N}(s) - f_{M,N}(t)} e^{N \frac{\xi t - \eta s}{\rho_{M,N}}} \notag\\
	&= \mathrm{P.V.} \int_{\Sigma_{\text{local},+}} \frac{dt}{2\pi i} \int_{C_{\text{local},+}} \frac{ds}{2\pi i} \frac{1}{s - t} e^{f_{M,N}(s) - f_{M,N}(t)} e^{N \frac{\xi(t - q_M(\theta)) - \eta(s - q_M(\theta))}{\rho_{M,N}}} \notag\\
	&= \mathrm{P.V.} \int_{\Sigma^0_{\text{local},+}} \frac{dt}{2\pi i} \int_{C_{\text{local},+}} \frac{ds}{2\pi i} \frac{1}{s - t} e^{ \frac{N g''_M(q_M(\theta))}{2} (s - q_M(\theta))^2 - N \frac{\eta(s - q_M(\theta))}{\rho_{M,N}} } \notag\\
	&\quad \times e^{ -\frac{N g''_M(q_M(\theta))}{2} (t - q_M(\theta))^2 + N \frac{\xi(t - q_M(\theta))}{\rho_{M,N}} } \left( 1 + \mathcal{O}(N^{-1/5}) \right),
\end{align}
where the error term is uniform.
Let $w = (s - q_M(\theta)) \sqrt{N}$ and $z = (t - q_M(\theta)) \sqrt{N}$. Then we obtain
\begin{align}
	&\int_{C_{\text{local},+}} \frac{ds}{2\pi i} \frac{1}{s - t} e^{ \frac{N g''_M(q_M(\theta))}{2} (s - q_M(\theta))^2 - N \frac{\eta(s - q_M(\theta))}{\rho_{M,N}} } \left( 1 + \mathcal{O}(N^{-1/5}) \right) \notag \\
	&= \int_{-N^{1/10}i}^{N^{1/10}i} \frac{dw}{2\pi i} \frac{1}{w - z} e^{ \frac{g''_M(q_M(\theta))}{2} w^2 - \sqrt{N} \frac{\eta w}{\rho_{M,N}} } \left( 1 + \mathcal{O}(N^{-1/5}) \right) = \mathcal{O}(1),
\end{align}
uniformly for $t \in \Sigma^0_{\text{local},+} \setminus \{q_M(\theta)\}$. The principal value integral then becomes
\begin{equation}
	\mathrm{P.V.} \int_{\Sigma_{\text{local},+}} \frac{dt}{2\pi i} \mathcal{O}(1) e^{ -\frac{N g''_M(q_M(\theta))}{2} (t - q_M(\theta))^2 + N \frac{\xi(t - q_M(\theta))}{\rho_{M,N}} } = \mathcal{O}(N^{-2/5}).
\end{equation}
Thus, the double contour integral over $C_{\text{local},\pm} \times \Sigma^0_{\text{local},\pm}$ is $\mathcal{O}(N^{-2/5})$. Combining Lemmas \ref{glolemma} and \ref{lolemma}, there exists $\epsilon > 0$ such that for $t \in \Sigma_{\text{global}}$ and $s \in C_{\text{global}}$
\begin{equation}
	\Re f_{M,N}(s) + \epsilon N^{1/5} < \Re f_{M,N}(q_M(\theta)) \quad \text{and} \quad \Re f_{M,N}(\overline{q_M(\theta)}) < \Re f_{M,N}(t) - \epsilon N^{1/5},
\end{equation}
and $\Re f_{M,N}(s) \to -\infty$ rapidly as $\Im s \to \pm \infty$ along $C_{\text{global}}$. Hence,
\begin{equation}
	\left| \int_{C_\theta \times \Sigma_{\text{out}} \setminus (C_{\text{local},+} \times \Sigma_{\text{local},+} \cup C_{\text{local},-} \times \Sigma_{\text{local},-})} \frac{ds}{2\pi i} \frac{dt}{2\pi i} \frac{1}{s - t} e^{f_{M,N}(s) - f_{M,N}(t)} e^{N \frac{\xi t - \eta s}{\rho_{M,N}}} \right| = \mathcal{O}(e^{-\epsilon N^{1/5}}),
\end{equation}
since $|s - t|^{-1} = \mathcal{O}(N^{-2/5})$. This proves that $I_1 = \mathcal{O}(N^{-2/5})$, thus completing the proof.
	
\end{proof}
\begin{proof}[Proof of Lemma \ref{glolemma}]

Recall that $x(\theta)$ is defined as in \eqref{x_theta}. A direct computation shows that
\begin{equation}
	\frac{d}{d\phi} g_M(q_M(\phi); \theta) = (x(\phi)-x(\theta)) \frac{d}{d\phi} q_M(\phi). 
	\tag{2.152}
\end{equation}
One can show that both $x(\phi)$ and $\Re q_M(\phi)$ are decreasing functions for $\phi \in [0, \pi)$. Then, part (a) follows.

To prove part (b), we require the following two computations
\begin{align}
	\frac{d}{dv} \Re g_M(u + iv; \theta) &= -\Im \left. \frac{d}{dz} g_M(z; \theta) \right|_{z=u+iv}\notag \\  
	&= -\arctan\frac{v}{u+1} - M\arctan\frac{v}{u+\frac{1}{y}} + \arctan\frac{v}{u},
\end{align}
and
\begin{align}\label{g2}
	\frac{d^2}{dv^2} \Re g_M(u + iv; \theta) &= -\Re \left. \frac{d^2}{dz^2} g_M(z; \theta) \right|_{z=u+iv} \notag \\
	&= -\frac{u+1}{(u+1)^2-v^2} - M\frac{u+\frac{1}{y}}{(u+\frac{1}{y})^2-v^2} + \frac{u}{u^2-v^2}.
\end{align}
For $\theta \in (0, \pi)$, we have $\Re q_M(\theta) < r(0) - \frac{1}{y}$. From \eqref{g2}, there exists a unique $v_0 > 0$ such that
\begin{equation}
	\left. \frac{d^2}{dv^2} \Re g_M(\Re q_M(\theta) + iv; \theta) \right|_{v=v_0} = 0. 
\end{equation}
It is straightforward to see that $\frac{d}{dv} \Re g_M(\Re q_M(\theta) + iv; \theta)$ is increasing for $v \in (0, v_0)$ and decreasing for $v > v_0$. Combining this with the fact that
\begin{equation}
	\left. \frac{d}{dv} \Re g_M(u + iv; \theta) \right|_{v=0} = \left. \frac{d}{dv} \Re g_M(u + iv; \theta) \right|_{v=\Im q_M(\theta)} = 0, 
\end{equation}
and analyzing the signs of the first and second derivatives of $\Re g_M(z;\theta)$, we deduce the monotonicity of the function in the corresponding intervals. Thus, the first two inequalities in part (b) follow. By the symmetry of $\Re g_M(u + iv; \theta)$, the remaining two inequalities also hold.
\end{proof}
	\begin{proof}[Proof of Lemma \ref{lolemma}]
We now prove the case for $q_M(\theta)$; the case for $\overline{q_M(\theta)}$ follows similarly. From the expression
\begin{equation}
	g''_M(q_M(\theta);\theta) = \frac{1}{q_M(\theta)+1} + \frac{M}{q_M(\theta)+\frac{1}{y}} - \frac{1}{q_M(\theta)},
\end{equation}
we observe that for any fixed and sufficiently large $M$, $\Re g''_M(q_M(\theta);\theta) > 0$. Consequently, along $C_{\text{local},+}$, the real part $\Re g_M(s;\theta)$ decreases quadratically as we move away from $q_M(\theta)$.

On the other hand, the curve $\Sigma_{\text{local},+}^0$ has tangent direction at $q_M(\theta)$ given by
\begin{equation}
	\arg q'_M(\theta) = \arg(\cos\phi + i\sin\phi).
\end{equation}
The second-order directional derivative of $\Re g_M(z;\theta)$ at $q_M(\theta)$ in the direction $(\cos\phi, \sin\phi)$ is computed as
\begin{align}
	D^2_{\phi} \Re g_M(q_M(\theta); \theta) &= \Re g''_M(q_M(\theta);\theta) (\cos^2\phi - \sin^2\phi) - 2\Im g''_M(q_M(\theta);
	\theta) \cos\phi \sin\phi \notag \\
	&= \Re \left( g''_M(q_M(\theta);\theta) (\cos\phi + i\sin\phi)^2 \right).
\end{align}
A direct computation shows that $D^2_{\phi} \Re g_M(q_M(\theta); \theta) > 0$. Therefore, along $\Sigma_{\text{local},+}^0$, the real part $\Re g_M(t;\theta)$ increases quadratically as we move away from $q_M(\theta)$.
\end{proof}

\section{Concluding remarks}	
Our results generalize several known results in random matrix theory. For a model formed by multiplying a rectangular matrix with several square matrices, we obtain the limiting spectral density and its parametric representation for $Y_MY_M^{*}$ in the case where all rectangularity parameters are equal ($y_l=y$). The analysis, however, is restricted to this homogeneous case, as determining the global spectral density for models with distinct parameters  $y_l$ remains an open challenge.

 Therefore, future research will be directed toward weakening the assumption of a common asymptotic limit for the rectangularity parameters. This will enable the study of global spectral density and local statistical universality in these broader classes of rectangular random matrix models.
	\begin{acknow}
	 This work was supported by the National Natural Science Foundation of China \#12371157.
	\end{acknow}

\end{document}